\newtheorem{theorem}{Theorem}
\newtheorem{lemma}{Lemma}
\newtheorem{corollary}{Corollary}
\newcommand{\abs}[1]{\ensuremath{\left\vert#1\right\vert}}
\newcommand{\cfunof}[1]{\ensuremath{\left\{#1\right\}}}
\newcommand{\C}{\ensuremath{\mathbb{C}}}
\newcommand{\Ce}{\ensuremath{\hat{\mathbb{C}}}}
\newcommand{\funof}[1]{\ensuremath{\left(#1\right)}}
\newcommand{\Hfty}{\ensuremath{\mathscr{H}_{\infty}}}
\newcommand{\jw}{\ensuremath{\left(j\omega\right)}}
\newcommand{\norm}[1]{\ensuremath{\left\Vert #1 \right\Vert}}
\newcommand{\Rat}{\ensuremath{\mathscr{R}}}
\newcommand{\RH}{\ensuremath{\mathscr{RH}_{\infty}}}
\newcommand{\R}{\ensuremath{\mathbb{R}}}
\newcommand{\s}{\ensuremath{\left(s\right)}}
\newcommand{\tm}{\ensuremath{\left(t\right)}}
\newcommand\includetikz[2][]{{{%
	\pgfkeys{/pgf/images/include external/.code={\includegraphics[#1]{##1}}}%
	\tikzsetnextfilename{#2}%
	\input{./Tikz/#2.tex}%
}}}
\renewcommand\includetikz[2][]{\includegraphics[#1]{Figures/Tikz/#2}}
\pgfplotsset{compat=newest} 
\crefname{problem}{problem}{problems}
\crefname{proposition}{proposition}{propositions}
\crefname{procedure}{procedure}{procedures}
\crefname{assumption}{assumption}{assumptions}
\title{\LARGE \bf
Scale Free Bounds on the Amplification of Disturbances in Mass Chains
}
\author{Richard Pates and Kaoru Yamamoto
\thanks{The authors are members of the LCCC Linnaeus Center and the ELLIIT Excellence Center at Lund University.}
\thanks{This work was supported by the Swedish Research Council through the LCCC Linnaeus Center.}
\thanks{R. Pates and K. Yamamoto are  with the Department of Automatic Control, Lund University, Box 118, {SE-221 00 Lund}, Sweden.
        }%
}
\begin{document}

\maketitle
\thispagestyle{empty}
\pagestyle{empty}


\begin{abstract}
We give a method for designing a mechanical impedance to suppress the propagation of disturbances along a chain of masses. The key feature of our method is that it is \textit{scale free}. This means that it can be used to give a single, fixed, design, with provable performance guarantees in mass chains of any length. We illustrate the approach by designing a bidirectional control law in a vehicle platoon in a manner that is independent of the number of vehicles in the platoon.
\end{abstract}

\section{Introduction}



We study the propagation of disturbances in the mass chain in \Cref{fig:masschain}. More specifically we investigate how the transfer function from the position of the movable point to the first intermass displacement in a chain of $N$ masses
\begin{equation}\label{eq:Fn}
F_N\s\coloneqq{}\frac{x_0\s-x_1\s}{x_0\s}
\end{equation}
changes as $N$ changes. Our main contribution is to show that if
\begin{equation}\label{eq:h}
sZ\s{}m\eqqcolon{}h\s=\frac{a^2s^2}{\funof{1-a}s^2+2s+1},
\end{equation}
where $a>0$, then
\begin{equation}\label{eq:bound}
\sup_{N\in\mathbb{N}}\norm{F_N\s}_\infty\leq{}a.
\end{equation}
That is, when the models of the components in the mass chain take on the particular canonical form in \cref{eq:h}, the \Hfty{} norm of the the entire family of transfer functions $F_N\s$ can be bounded by $a$. We use this to develop a \textit{scale free} design method, that is a method that is independent of the number of masses in the chain, for designing the frequency responses of $F_N\s$.

The importance of a scale free design method for a mass chain stems from the prevalence of modelling network control problems with mechanical analogues. Examples include the platooning of vehicles \cite{LA66}, both frequency and voltage stability problems in electrical power systems \cite{MBB97,SDB16}, and flocking and consensus phenomena \cite{BJM12}. These applications typically involve very large numbers of subsystems, and the numbers of subsystems is often subject to change. The advantage of a scale free method is that it is easily applied independently of problem size, and any design remains valid even if the number of subsystems changes. 

The specific problem studied in this paper is too simplistic for many of these applications, since we only consider a single performance measure, and a string topology. Scale free performance criteria are however extremely rare. The overwhelming majority of existing scale free results, for example those based on passivity or dissipativity \cite{Wil72}, the multivariable Nyquist criterion \cite{LV06}, or IQC criteria \cite{PV17}, have focused on the question of \textit{robust stability}, rather than performance. This shortcoming stems at least in part from the fact that several key performance measures, in particular those relating to large scale network behaviours, simply do not scale. Notable examples include the string instability or network incoherence phenomena in \cite{SPH04,BJM12}. However it appears that average or local performance measures, for example those in \cite{Pat15,BJM12}, can be guaranteed in a scale free manner. The result we present here is another example. We therefore see the results in this paper as another step towards understanding the role of scale free design, in a setting that is relevant to a wide range of application areas.

The results in this paper build on the following recursive formula from \cite{YS16} for computing the transfer functions from the movable point to the first intermass displacement:
\begin{equation}\label{eq:it}
F_{N+1}\s=\frac{F_N\s+h\s}{F_N\s+h\s+1},\;F_0\s=0.
\end{equation}
The remarkable feature of this formula is that it allows the actual transfer function for $F_N\s$ to be computed in a simple manner, even when the impedance $Z\s$ has extremely general dynamics. This not only allows us to derive the norm based performance criterion in \cref{eq:bound}, given as \Cref{thm:main} in \Cref{sec:res}, but also to build up a picture of the entire frequency response of $F_N\s$. This, along with the conservatism of the method, is discussed in \Cref{sec:disc}.

\begin{figure}[!bt]
      \centering
      \includegraphics[trim=70mm 120mm 50mm 50mm, clip, scale=0.4]{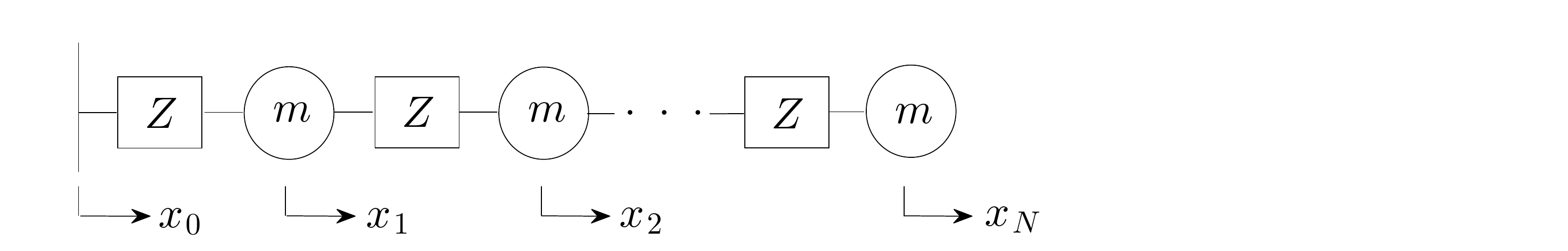}
      \caption{Chain of $N$ masses $m$ connected by a mechanical impedance $Z(s)$, and connected to a movable point $x_0$.}
      \label{fig:masschain}
\end{figure}

\section*{Notation}

$\Rat$ denotes the set of real rational not necessarily proper transfer functions, \Hfty{} denotes the Hardy space of transfer functions that are analytic on the open right half plane $\C_+$ with norm $\norm{G\s}_\infty\coloneqq{}\sup_{s\in\C_+}\abs{G\s}$, and $\RH\coloneqq{}\Rat\cap\Hfty$. 
$\Ce\coloneqq{}\C\cup\cfunof{\infty}$ denotes the extended complex plane. 
The principal value of the square root of $z \in \C$ is defined by $\sqrt{|z|}\exp\funof{j\angle z/2}$ where $-\pi<\angle z \leq \pi$.
The impedance of a linear time-invariant mechanical one-port network with force-velocity pair $(F, v)$ is defined by the ratio $v(s)/F(s)$.

\section{Results}\label{sec:res}

In this section we present the mathematical result that underpins the scale free design method discussed in \Cref{sec:disc}. The following theorem shows that when $h\s$ takes on the particular canonical form in \cref{eq:h}, the largest $\Hfty$ norm of the family of transfer functions generated by the complex iterative map in \cref{eq:it} can be bounded by $a$.

\begin{theorem}\label{thm:main}
Let $a>0$, and define the family of transfer functions
\[
F_{N+1}\s=\frac{F_N\s+h\s}{F_N\s+h\s+1},\;F_0\s=0.
\]
If
\[
h\s=\frac{a^2s^2}{\funof{1-a}s^2+2s+1},
\]
then
\[
\sup_{N\in\mathbb{N}}\norm{F_N\s}_\infty \leq a.
\]
\end{theorem}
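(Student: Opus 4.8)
The plan is to analyze the iteration in \cref{eq:it} directly on the imaginary axis. Since each $F_N$ is a rational function and the recursion is a Möbius transformation in $F_N$, the key observation is that the map $w\mapsto(w+h)/(w+h+1)$ is the composition of the translation $w\mapsto w+h$ with the fixed Möbius map $\phi(u)=u/(u+1)$. So if I write $w_N\coloneqq F_N(j\omega)$ and $h_0\coloneqq h(j\omega)$ for a fixed frequency $\omega$, I need to understand the orbit of $0$ under $w\mapsto\phi(w+h_0)$ in $\mathbb{C}$, and show it never leaves the closed disc of radius $a$. The first step is therefore to identify, for each $\omega$, a region $D_\omega\subseteq\mathbb{C}$ that (i) contains $0$, (ii) is mapped into itself by $w\mapsto\phi(w+h_0)$, and (iii) is contained in $\{|w|\le a\}$. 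The natural candidate, given the canonical form of $h$, is some disc or half-plane whose boundary passes through the critical points; I would compute $h(j\omega)$ explicitly — writing $s=j\omega$ gives $h(j\omega)=-a^2\omega^2/((1-a)(-\omega^2)+2j\omega+1)=-a^2\omega^2/((1-(1-a)\omega^2)+2j\omega)$ — and look at how $\phi$ maps the translated region.

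The cleanest route is likely to exploit that $\phi$ maps half-planes and discs to discs/half-planes, and to guess that the invariant region is a disc $D$ (possibly depending on $\omega$) of radius exactly $a$, or perhaps the fixed disc $\{|w|\le a\}$ itself together with the fact that $w+h_0$ lands in a half-plane that $\phi$ compresses back inside. Concretely, I would try to show: if $|w|\le a$ then $\phi(w+h_0)$ again satisfies $|\phi(w+h_0)|\le a$. Using $\phi(u)=1-1/(u+1)$, this is equivalent to $\left|1-\tfrac{1}{w+h_0+1}\right|\le a$, i.e. the point $1/(w+h_0+1)$ lies in the disc of radius $a$ centered at $1$. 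Inverting, $w+h_0+1$ must lie outside (or on) the image of that disc under inversion, which is again a disc or half-plane. So the whole claim reduces to an inclusion between two explicit circles/lines parametrized by $\omega$ and $a$: the image of $\{|w|\le a\}$ under $w\mapsto w+h_0+1$ must sit in the appropriate region. Because $h_0$ was chosen precisely so that $h$ has this canonical denominator $(1-a)s^2+2s+1$, I expect this circle-inclusion to hold with equality at the "worst" frequency, which is what makes the bound tight.

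Carrying this out, the concrete steps are: (1) fix $\omega$, set $h_0=h(j\omega)$, and reduce the induction step to the circle/half-plane inclusion described above; (2) parametrize and verify that inclusion as an inequality in $\omega$ and $a$ — this is where a short but genuine computation lives, using the specific numerator $a^2s^2$ and denominator of $h$; (3) check the base case $F_0=0$ trivially lies in the region; (4) conclude $\|F_N\|_\infty\le a$ for every $N$ by the maximum modulus principle, noting each $F_N\in\RH$ (which needs a brief argument that the iteration preserves stability and properness, or at least that $F_N$ has no poles in $\overline{\mathbb{C}_+}$ so that the sup over $\mathbb{C}_+$ is attained on the boundary); (5) take the supremum over $N$.

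The main obstacle I anticipate is step (2): pinning down the right invariant region and verifying the circle inclusion cleanly. A naive guess of "the disc $\{|w|\le a\}$ is invariant for every $\omega$" may well be false — the invariant region may have to shrink or shift with $\omega$ — so the real work is finding a family of regions $D_\omega$ that is simultaneously forward-invariant, contains $0$, and stays within radius $a$, and then showing the canonical form of $h$ is exactly what guarantees this. A secondary technical point is handling $\omega=0$ and $\omega\to\infty$ (where $h$ degenerates — at $\omega=0$, $h_0=0$ and $F_N(0)=0$ for all $N$; at $\omega=\infty$, $h_0=a^2/(1-a)$ if $a\neq1$, or $\infty$-like behavior if $a=1$), and confirming the argument, or the limiting value $a$, survives these edge cases.
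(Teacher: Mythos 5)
Your overall strategy is the same as the paper's: fix $\omega$, view the recursion as the Möbius map $w\mapsto (w+h(j\omega))/(w+h(j\omega)+1)$, find an $\omega$-dependent region $D_\omega$ containing $0$, forward-invariant, and contained in $\{|w|\le a\}$, then convert the imaginary-axis bound into an $\mathcal{H}_\infty$ bound using the fact that each $F_N$ is stable. The problem is that you stop exactly where the theorem's content lies: you never identify $D_\omega$, and you explicitly flag its construction and the resulting circle inclusion as an anticipated obstacle. The paper's proof hinges on the specific choice
\[
D_\omega=\left\{\,w\in\mathbb{C}\;:\;\left|\tfrac{2}{a}\,w-\sqrt{\tfrac{j\omega-1}{j\omega+1}}\right|\le 1\right\},
\]
i.e.\ the disc of radius $a/2$ centered at $\tfrac{a}{2}\sqrt{(j\omega-1)/(j\omega+1)}$, which touches the origin and sits inside $\{|w|\le a\}$; invariance of this disc under the recursion is then verified by a genuine computation (in the paper, a quadratic-form/Schur-complement argument with $t=\arctan(1/\omega)$) in which the canonical denominator $(1-a)s^2+2s+1$ is used in an essential way. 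Your fallback guess that $\{|w|\le a\}$ itself might be invariant is not what the paper establishes, and you yourself doubt it; without exhibiting a concrete invariant family and carrying out the verification, the proposal is a plan rather than a proof, and it gives no evidence that the specific $h$ in the statement actually yields the bound $a$.

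A secondary gap: your step (4) asserts $F_N\in\mathcal{RH}_\infty$ "needs a brief argument that the iteration preserves stability," but ruling out closed right-half-plane poles of the iterates is not automatic; the paper handles it by checking that $h(s)\notin(-4,0)$ for all $s$ in the closed right half plane and invoking Theorem 1 of the cited reference \cite{YS16}. If you intend to prove stability of the iterates from scratch, that is an additional piece of real work your outline does not contain.
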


\begin{proof}
The proof will be in two stages. We will first show that the following inequality holds for all $i\in\mathbb{N}$:
\begin{equation}\label{thm2eq:2}
\sup_{\omega\in\R}\abs{\frac{2}{a}F_i\jw-\sqrt{\frac{j\omega-1}{j\omega{}+1}}}\leq{}1.
\end{equation}
We will then show the above implies that $\norm{F_i\s}_\infty{}\leq{}a$.

We will prove that \cref{thm2eq:2} holds for all $i\in\mathbb{N}$ by induction. Define the all pass filter
\[
z\s\coloneqq{}\sqrt{\frac{s-1}{s+1}}.
\]
Therefore because $F_0\s=0$ and $\norm{z}_\infty=1$, \cref{thm2eq:2} holds for $i=0$. Now assume that \cref{thm2eq:2} holds for $i=N$. Define
\[
\begin{aligned}
n\s&=\funof{F_N\s+h\s},\\
m\s&=a\funof{F_N\s+h\s+1}.
\end{aligned}
\]
Hence $\frac{1}{a}F_{N+1}\s=\frac{n\s}{m\s}$. We may therefore rewrite \cref{thm2eq:2} for $i=N+1$ as
\begin{align}
\nonumber{}\funof{\frac{2n\jw}{m\jw}-z\jw}^*\!\!\funof{\frac{2n\jw}{m\jw}-z\jw}&\leq{}1\;\forall{\omega}\in\R.\label{thm2eq:4}
\end{align}
Multiplying through by $m\s^*m\s$ and using the fact that $z^*\jw{}z\jw=1$ shows that this is equivalent to
\[
\funof{2n^*n-z^*m^*n-zmn^*}\jw\leq{}0\;\forall{\omega}\in\R.
\]
Suppressing the dependence on $\omega$, it is quickly established that the above is equivalent to
\[
\begin{bmatrix}
F_{N}\\a
\end{bmatrix}^*
\funof{A+B+B^*}
\begin{bmatrix}
F_{N}\\a
\end{bmatrix}
\leq{}0,
\]
where
\[
A=\begin{bmatrix}
2&-z\\-z^*&0
\end{bmatrix}\;\text{and}\;B=\begin{bmatrix}
-az&\frac{2h}{a}-h\funof{z+z^*}\\0&\frac{\abs{h}^2}{a^2}-\frac{hz^*\funof{h^*+1}}{a}
\end{bmatrix}.
\]
Note that since \cref{thm2eq:2} holds for $i=N$,
\[
\begin{bmatrix}
F_{N}\\a
\end{bmatrix}^*
A
\begin{bmatrix}
F_{N}\\a
\end{bmatrix}
\leq{}0,
\]
so it is sufficient to show that $B+B^*\leq{}0$. Next observe that
\[
\begin{aligned}
z\jw&=\sqrt{\frac{1-\frac{1}{j\omega}}{1+\frac{1}{j\omega}}}
=\frac{1+\frac{j}{\omega}}{\sqrt{1+\frac{1}{\omega^2}}}\\
&=\exp\funof{j\arctan\funof{\frac{1}{\omega}}}.
\end{aligned}
\]
Defining $t\coloneqq{}\arctan\funof{\frac{1}{\omega}}$, and substituting in for $h$ and $z$, it can be verified algebraically that
\[
B+B^*=2a\cos{t}\begin{bmatrix}
-1&\frac{\omega^2\funof{a-\sec{t}}}{\omega^2\funof{a-1}+1+2j\omega{}}\\
\star{}&\frac{y}{\funof{\omega^2\funof{a-1}+1+2j\omega{}}^*\funof{\omega^2\funof{a-1}+1+2j\omega{}}}
\end{bmatrix}
\]
where the `$\star$' denotes the entry required to make the above Hermitian, and
\[
y=\omega^4\funof{-a^2+a-1}+a\omega^4\sec{t}-2\omega^3\tan{t}+\omega^2.
\]
Non-positivity of $B+B^*$ is equivalent to non-positivity of its Schur complement $\Delta$, which is given by:
\[
\begin{aligned}
\Delta={2a\cos{t}\funof{-1-{\omega^4}\funof{a-\sec{t}}^2/y}}.
\end{aligned}
\]
Substituting back in for $t$ shows that
\[
-y-\omega^4\funof{a-\sec{t}}^2=a\omega^4\funof{\sqrt{1/\omega^2+1}-1}\geq{}0.
\]
Observe that this also implies that $-y\geq{}\omega^4\funof{a-\sec{t}}^2$ which implies that $y\leq{}0$. Since $\cos{t}=1/\sqrt{1/\omega^2+1}$, it then follows that $\Delta\leq{}0$ as required. Therefore \cref{thm2eq:2} holds for $i=N+1$, and consequently for all $i\in\mathbb{N}$ by induction.

We will now show that this implies that $\norm{F_i\s}_\infty\leq{}a$. In words, \cref{thm2eq:2} states that for any ${\omega}\in\R$, the complex number $\frac{2}{a}F_{i}\jw$ lies within a circle centered on $z\jw$. Since $\abs{z\jw}=1$, it then follows that meeting \cref{thm2eq:2} implies that for all $i\in\mathbb{N}$
\begin{equation}\label{eq:111}
\sup_{\omega\in\R}\abs{\frac{2}{a}F_i\jw}\leq{}2,\;\Leftrightarrow{\;}\sup_{\omega\in\R}\abs{F_i\jw}\leq{}a.
\end{equation}
Finally, it is easily shown that $h\s\notin\funof{-4,0}$ for all $s$ in the closed right half plane. Therefore by \cite[Theorem 1]{YS16}, $F_i\in\RH$. Consequently $\norm{F_i\s}_\infty\leq{}a$ as required.
\end{proof}

\section{Discussion}\label{sec:disc}

\subsection{How can \Cref{thm:main} be used for scale free design?}

In this section we give a method for designing the mechanical impedance $Z\s$ to suppress disturbances from the movable point to the first intermass displacement in mass chains of any length. To do so, we pose the following weighted scale free \Hfty{} problem: Design $Z\s\in\Rat_Z$ such that
\begin{equation}\label{eq:desp}
\sup_{N\in\mathbb{N}}\norm{W\s{}F_N\s}_\infty\leq{}1.
\end{equation}
In the above $W\s\in\Rat$ is a weighting function, chosen to specify the requirements on the propagation of disturbances, and $\Rat_Z\subset\Rat$ the set of possible designs for the mechanical impedance $Z\s$. We will illustrate our method by designing a bidirectional controller for a vehicle platoon.

\begin{figure}
    \centering
   \includetikz[width=\columnwidth]{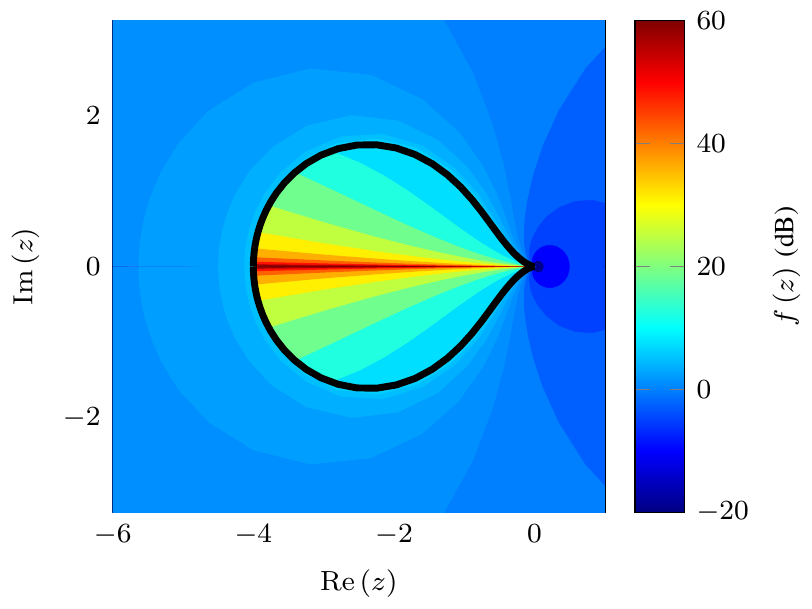}
    \caption{Plot of $f\funof{z}$. The black curve shows the contour $f\funof{z}=2$.}
    \label{fig:contour}
\end{figure}

\subsubsection{The scale free method} At first sight, \Cref{thm:main} looks far too restricted to solve the design problem in \cref{eq:desp}. After all, \Cref{thm:main} only applies when $h\s$ (and hence $Z\s$) takes on a particular canonical form, and doesn't involve any weighting functions. However as shown below, by exploiting the fact that an \Hfty{} norm bound guarantees a pointwise bound in $s$, \Cref{thm:main} can be used to bound $F_N\s$ even when general transfer functions $h\s$ are considered.

\begin{corollary}\label{cor:1}
Let $h\s\in\Rat$, and define
\begin{equation}\label{eq:fn}
\begin{aligned}
f\funof{z}\coloneqq{}&\min_{a>0,\omega\in\R\cup\cfunof{\infty}}a\\
&\text{s.t.}\;\;z=\frac{-a^2\omega^2}{-\funof{1-a}\omega^2+{2j}\omega+1}.
\end{aligned}
\end{equation}
For any $s\in \Ce$, if $h\s\notin\funof{-4,0}$, then
\[
\sup_{N\in\mathbb{N}}\abs{F_N\s}\leq{}f\funof{h\s}.
\]
\end{corollary}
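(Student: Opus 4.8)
The plan is to reduce \Cref{cor:1} to \Cref{thm:main} by exploiting that, at each fixed $s$, the number $F_N\s$ depends on $s$ only through the scalar $h\s$. First I would decouple the recursion \cref{eq:it} from $s$: setting $\Phi_0\funof{w}=0$ and $\Phi_{N+1}\funof{w}=\frac{\Phi_N\funof{w}+w}{\Phi_N\funof{w}+w+1}$, an immediate induction gives $F_N\s=\Phi_N\funof{h\s}$ for every $N$ and every $s$. The same rational maps $\Phi_N$ generate the family associated with \emph{any} transfer function, so if $g\funof{s_1}=h\s$ for some $g\in\Rat$ and some $s_1\in\Ce$, and $G_N$ denotes the family generated by \cref{eq:it} with $g$ in place of $h$, then $F_N\s=\Phi_N\funof{h\s}=\Phi_N\funof{g\funof{s_1}}=G_N\funof{s_1}$.

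Now fix $s\in\Ce$ with $z\coloneqq{}h\s\notin\funof{-4,0}$, and take \emph{any} pair $\funof{a,\omega}$, $a>0$, $\omega\in\R\cup\cfunof{\infty}$, feasible for \cref{eq:fn}. The constraint in \cref{eq:fn} says exactly that the canonical transfer function $g\s\coloneqq{}\frac{a^2s^2}{\funof{1-a}s^2+2s+1}$ satisfies $g\jw=z$ (with $\omega=\infty$ read off as the appropriate limit). Applying \Cref{thm:main} to this $g$ gives $\sup_N\norm{G_N\s}_\infty\leq{}a$, where $G_N$ is the family generated by \cref{eq:it} with $g$ in place of $h$; and, exactly as at the close of the proof of \Cref{thm:main}, $g\s\notin\funof{-4,0}$ on the closed right half plane, so $G_N\in\RH$ and $\norm{G_N\s}_\infty=\sup_{\nu\in\R\cup\cfunof{\infty}}\abs{G_N\funof{j\nu}}\geq{}\abs{G_N\jw}$. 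Combining this with the identity $F_N\s=G_N\jw$ from the first step gives $\abs{F_N\s}\leq{}a$; since the feasible pair $\funof{a,\omega}$ was arbitrary, $\abs{F_N\s}\leq{}f\funof{h\s}$ for every $N$, and a supremum over $N$ finishes the proof.

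The one genuinely substantive point is that $f\funof{z}$ is well defined on $\C\setminus\funof{-4,0}$, i.e. that the optimisation in \cref{eq:fn} is feasible for the given $z$: one must check that, as $a$ ranges over $\funof{0,\infty}$ and $\omega$ over $\R\cup\cfunof{\infty}$, the loci $\omega\mapsto\frac{-a^2\omega^2}{-\funof{1-a}\omega^2+2j\omega+1}$ cover all of $\C\setminus\funof{-4,0}$ --- equivalently, that clearing denominators in the constraint yields, for some $a>0$, a real root $\omega$. I expect this to be the main obstacle, being the only place where the special status of the interval $\funof{-4,0}$ (the set on which the M\"obius step of \cref{eq:it} is ill-behaved) has to be tied to the canonical family. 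The remaining points are routine: interpreting the degenerate cases $\omega=\infty$, $a=1$, $h\s=\infty$ by the appropriate limits of the M\"obius map --- where $\Phi_N\funof{w}\to1$ as $w\to\infty$ keeps everything consistent --- and observing that $\Phi_N$ has no poles outside $\funof{-4,0}$, so $\Phi_N\funof{z}$ is finite and the evaluations above are legitimate.
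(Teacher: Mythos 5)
Your proposal is correct and takes essentially the same route as the paper: evaluate the recursion pointwise so that $F_N\s$ depends on $s$ only through $h\s$, match $z=h\s$ to the canonical form at some $j\omega$ for each feasible pair $\funof{a,\omega}$, invoke \Cref{thm:main} to get $\abs{F_N\s}\leq{}a$, and then minimise over feasible pairs. The well-definedness of $f$ on $\Ce\setminus\funof{-4,0}$ that you flag as the remaining substantive point is exactly the step the paper also leaves as ``easily established''.
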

\begin{proof}
If \cref{eq:fn} holds, then for some $s=j\omega$, a transfer function of the canonical form in \cref{eq:h} equals $z$. Hence for any $s$ such that $h\s=z$, $\abs{F_N\s}\leq{}a$ by \Cref{thm:main}. We perform the minimisation over $a$ because in general the pairs \funof{a,\omega} meeting \cref{eq:fn} are not unique. The result follows since it is easily established that the function is well defined for all $z\in\Ce\setminus\funof{-4,0}$.
\end{proof}

It follows from \Cref{cor:1} that we can tackle the problem in \cref{eq:desp} by designing an $h\s\in\Rat$ such that
\begin{equation}\label{eq:d2}
\max_{\omega\in\R\cup\cfunof{\infty}}\abs{W\jw}f\funof{h\jw}\leq{}1,
\end{equation}
and $h\s\notin\funof{-4,0},\forall{s}\in\C_+$. We can treat $f\funof{h\jw}$ just as we would a normal frequency response, and solve the problem by loopshaping. To help with this step, see \Cref{fig:contour}, which gives a contour plot of $f\funof{z}$. From this figure we see that $f\funof{z}$ gets larger and larger as we approach the `critical strip' \funof{-4,0}. Therefore in frequency ranges where a very small $f\funof{h\jw}$ is required, we only need to tune the parameters in $Z\jw$ to push $h\jw$ away from this region.

\begin{figure}
    \centering
   \includetikz[width=\columnwidth]{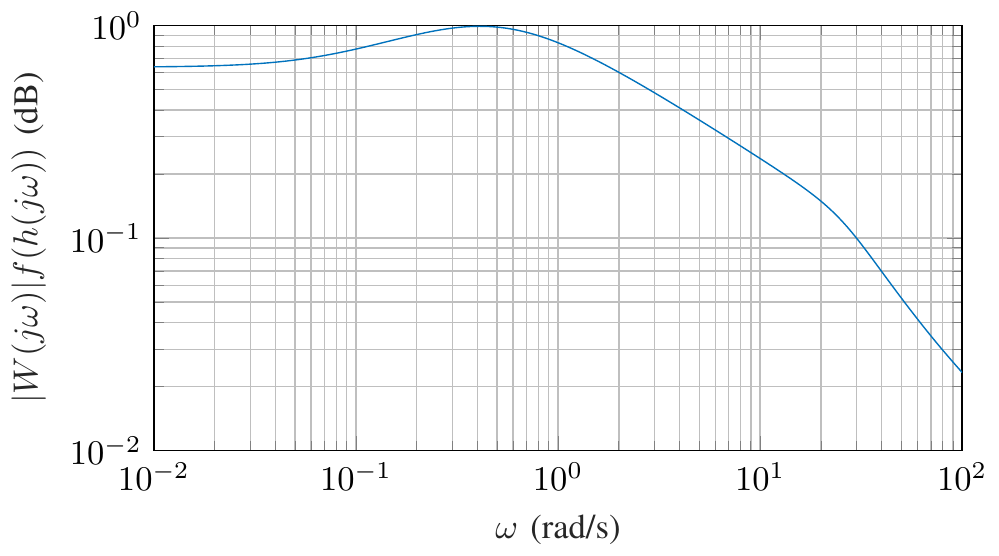}
    \caption{Plot of $\abs{W\jw}f\funof{h\jw}$ for the designed vehicle platoon controller.}
    \label{fig:bode}
\end{figure}

\subsubsection{A platooning example} When modelling a vehicle platoon as a mass chain, each mass is analogous to a car, and each impedance the dynamics of a symmetric bidirectional control scheme. The displacements $x_0$ and $x_i$ give the position of the lead vehicle and the position of the $i$th vehicle, respectively. We consider the problem of designing a bidirectional controller for a vehicle platoon, with the objective of suppressing the propagation of disturbances along the platoon as a result of the lead vehicle speeding up or slowing down. We assume that $m=1$ throughout, and that all variables are defined relative to a nominal constant velocity trajectory. 

We now put this problem into the form of \cref{eq:desp} by defining an appropriate weighting function and impedance parametrisation. To reflect the low frequency nature of the disturbance from the lead vehicle, we select
\[
W\s = \frac{9s}{4s+2}.
\]
The standard vehicle platoon model with symmetric bidirectional control (see e.g. \cite{SPH04}) can be compactly described by
\begin{equation}\label{eq:model}
\begin{aligned}
x_i\s&=\frac{1}{s^2\funof{0.1s+1}}u_i\s\\
u_i(s) &=\begin{cases}
K(s)((x_{i+1}(s)-x_i(s))-(x_i(s)-x_{i-1}(s)))\\
 \hfill\text{for } i=1,\dots, n-1, \\
K(s)(x_{i-1}(s) - x_i(s)) \hspace{2.85mm} \text{for } i=n.
\end{cases}
\end{aligned}
\end{equation}
In the above $u_i\s$ is the input to the \emph{i}th vehicle, and $K\s$ the controller to be designed. To bring the mass chain model in line with this, we define
\[
\Rat_Z=\cfunof{Z(s): Z(s)=\frac{s\funof{0.1s+1}}{K\s}, \, K\s\in\RH}.
\]
The objective is then to pick a $K\s\in\RH$ such that \cref{eq:d2} holds with $h\s=\frac{s^2\funof{0.1s+1}}{K\s}$. Designing $K\s$ by loopshaping resulted in
\[
K\s=\frac{\funof{s+0.05}\funof{s+10}}{\funof{0.01s+1}\funof{0.02s+1}}.
\]
The `frequency response' for this design is shown in \Cref{fig:bode}. Time domain simulations for this design in platoons of different lengths are shown in \Cref{fig:time}. This figure shows how the first inter-vehicle displacement changes in response to
\begin{equation}\label{eq:disturb}
\hat{x}_0\tm=\begin{cases}
\frac{1}{1+e^{-0.2t+8}}&\text{if $t\geq{}0$,}\\
0&\text{otherwise.}
\end{cases}
\end{equation}
Here $\hat{x}_0\tm$ gives the displacement of the fixed point in the time domain. Therefore \cref{eq:disturb} describes a scenario in which the lead vehicle undergoes a short period of acceleration before returning to its original velocity. The disturbance is well suppressed in all cases.

\begin{figure} 
    \centering 
 \includetikz[width=0.8\columnwidth]{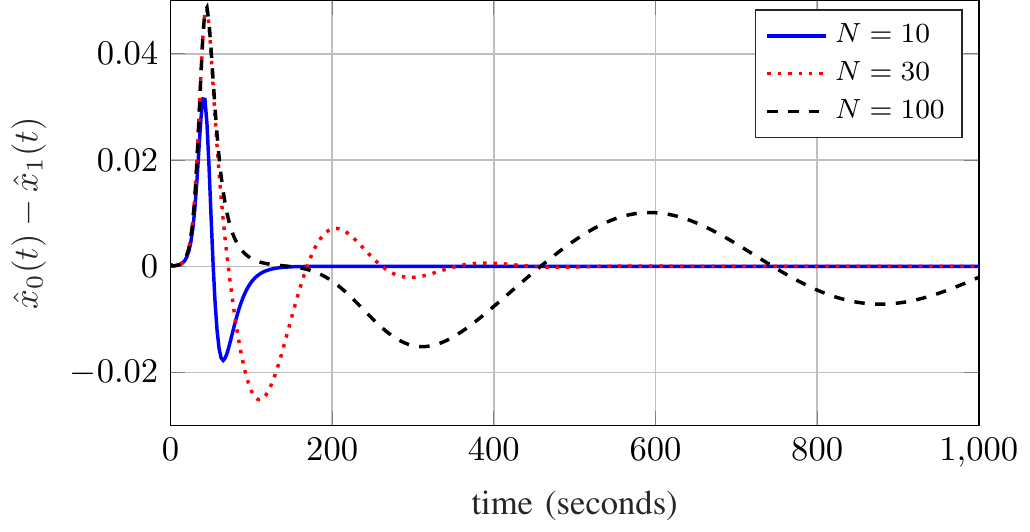}
    \caption{Change in first inter-vehicle displacement in response to \cref{eq:disturb}.}
    \label{fig:time}
\end{figure} 

\subsection{How conservative is \Cref{cor:1}?}

\begin{figure}
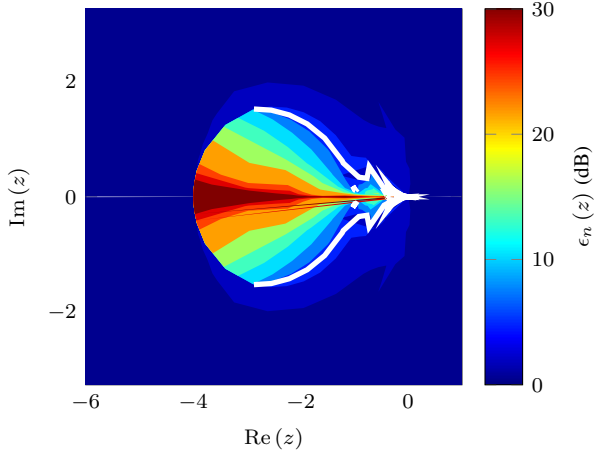

    \centering
   \includetikz{contour2}
    \caption{Plot of $\epsilon_n\funof{z}$, with $n=10^6$. The white curve shows the contour $\epsilon_n\funof{z}=2$.}
    \label{fig:err}
\end{figure}

In this section we will present numerical evidence to illustrate that provided $h\jw$  does not pass close to the critical strip \funof{-4,0}, the conservatism of our method is low. Consider the following lower bound.

\begin{lemma}
\label{lem:1}
Let $h\s\in\Rat$, and define 
\begin{equation}
    \begin{aligned}
    g_n\funof{z}\coloneqq{}&\max_{N\in\cfunof{1,\ldots{},n}}\abs{y_N}\\
    &\text{s.t.}\;\;y_{N+1}=\frac{y_N+z}{y_N+z+1},\, y_0=0.
    \end{aligned}
\end{equation}
For any $s\in\Ce$, if $h\s\notin\funof{-4,0}$, then
\[
\sup_{N\in\mathbb{N}}\abs{F_N\s}\geq{}g_n\funof{h\s}.
\]
\end{lemma}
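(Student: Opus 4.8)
The plan is to notice that the scalar recursion defining $g_n$ is nothing but the iteration \cref{eq:it} with the transfer function $h\s$ replaced by a fixed complex number. A single pointwise comparison then does all the work: evaluating the whole family $\funof{F_N\s}$ at one value of $s$ reproduces exactly the sequence $\funof{y_N}$ in the statement, so the supremum over all $N$ of $\abs{F_N\s}$ is at least the maximum of $\abs{y_N}$ over the finite window $N\in\cfunof{1,\ldots{},n}$, which is $g_n\funof{h\s}$.

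In more detail, I would fix $s\in\Ce$ with $h\s\notin\funof{-4,0}$, set $z\coloneqq{}h\s$, and prove $y_N=F_N\s$ by induction on $N$. The base case is $y_0=0=F_0\s$. For the inductive step, if $y_N=F_N\s$ then $y_{N+1}=\funof{y_N+z}/\funof{y_N+z+1}=\funof{F_N\s+h\s}/\funof{F_N\s+h\s+1}=F_{N+1}\s$ by \cref{eq:it}. Since $\cfunof{1,\ldots{},n}\subset\mathbb{N}$, it then follows that
\[
\sup_{N\in\mathbb{N}}\abs{F_N\s}\;\geq\;\max_{N\in\cfunof{1,\ldots{},n}}\abs{F_N\s}\;=\;\max_{N\in\cfunof{1,\ldots{},n}}\abs{y_N}\;=\;g_n\funof{h\s},
\]
which is the claimed bound.

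The only step needing care — and, I expect, the sole nontrivial ingredient — is the well-definedness of the iterates: the induction and the maximum are meaningful only if no denominator $y_N+z+1$ vanishes and no $F_N\s$ is infinite. This is precisely the role of the hypothesis $h\s\notin\funof{-4,0}$: it is the condition under which \cite[Theorem 1]{YS16} guarantees $F_N\in\RH$ (hence $F_N\s$ finite), so the scalar orbit $\funof{y_N}$ stays clear of the poles of the successive maps. One can also see this directly, since each map $w\mapsto\funof{w+z}/\funof{w+z+1}$ has determinant $\funof{z+1}-z=1\neq{}0$ and is therefore an automorphism of $\Ce$, while the values of $z$ at which the orbit starting from $0$ reaches $\infty$ all lie inside $\funof{-4,0}$. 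Everything else is the one-line induction above together with the trivial observation that restricting a supremum to a subset cannot increase it.
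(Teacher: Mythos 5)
Your argument is correct and is essentially the paper's own proof: the paper likewise observes that, for fixed $s$ with $z=h\s$, the iteration \cref{eq:it} gives $F_N\s=y_N$, and then the bound follows because the supremum over $\mathbb{N}$ dominates the maximum over the finite set $\cfunof{1,\ldots,n}$. Your additional remarks on well-definedness (via $h\s\notin\funof{-4,0}$ and \cite[Theorem 1]{YS16}) are a reasonable elaboration of a point the paper leaves implicit, but they do not change the route.
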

\begin{proof}
By \cref{eq:it}, $F_N\s=y_N$. The result is now immediate, since $\sup_{N\in\mathbb{N}}\abs{y_N}\geq{}\max_{N\in\cfunof{1,\ldots{},n}}\abs{y_N}.$
\end{proof}

Combining \Cref{cor:1} and \Cref{lem:1} shows that
\[
g_n\funof{h\s}\leq{}\sup_{N\in\mathbb{N}}\abs{F_N\s}\leq{}f\funof{h\s}.
\]
Therefore if
\[
\epsilon_n\funof{h\s}\coloneqq{}\frac{f\funof{h\s}}{g_n\funof{h\s}}\approx{}1,
\]
then 
\begin{equation}\label{eq:approx}
f\funof{h\s}\approx{}\sup_{N\in\mathbb{N}}\abs{F_N\s}.
\end{equation}
\Cref{fig:err} plots $\epsilon_n\funof{z}$ for $n=10^6$. From this figure we see that for large regions of the complex plane, $\epsilon_n\funof{z}$ is approximately 1, and is rarely greater than 2. Furthermore, those regions where we have good agreement also correspond to the regions where $f\funof{z}$ is small, which is precisely what we are trying to achieve with our design. Therefore, as a design tool, the approximation proposed in \cref{eq:d2} seems a good one. Nevertheless, it does appear that this step does introduce conservatism, since based on \Cref{fig:err} it is doubtful that for all $z\in\Ce$, $\lim_{n\rightarrow{}\infty}g_n\funof{z}=f\funof{z}$. It is interesting to think how the arguments used in the proof of \Cref{thm:main} can be improved to reduce this conservatism.

\section{Conclusion}

A scale free design method for the suppression of disturbances in mass chains has been presented. The method allows for the design of a single mechanical impedance function that has provable performance guarantees in mass chains of any length. The approach was illustrated by designing a bidirectional controller for a vehicle platoon.

\bibliographystyle{IEEEtran}
\bibliography{references.bib}

\end{document}